\documentclass[pdflatex,sn-mathphys-num]{sn-jnl}

\usepackage{graphicx}%
\usepackage{multirow}%
\usepackage{amsmath,amssymb,amsfonts}%
\usepackage{amsthm}%
\usepackage{mathrsfs}%
\usepackage[title]{appendix}%
\usepackage{xcolor}%
\usepackage{textcomp}%
\usepackage{manyfoot}%
\usepackage{booktabs}%
\usepackage{algorithm}%
\usepackage{algorithmicx}%
\usepackage{algpseudocode}%
\usepackage{listings}%
\usepackage{subcaption}

\theoremstyle{thmstyleone}%
\newtheorem{theorem}{Theorem}
\theoremstyle{thmstyletwo}%
\newtheorem{example}{Example}%
\newtheorem{remark}{Remark}%

\theoremstyle{thmstylethree}%

\begin{document}

\title[LMEP Unification]{Unifying Finite Differences and Semi-Lagrangian Schemes via Localized Matrix Exponentials}

\author{\fnm{V\'ictor} \sur{Bayona}}\email{vbayona@math.uc3m.es}

\affil{\orgname{Universidad Carlos III de Madrid, ROR: https://ror.org/03ths8210}, \orgdiv{Departamento de Matem\'aticas}, \orgaddress{\street{Avenida de la Universidad, 30}, \city{Legan\'es}, \postcode{28911}, \state{Madrid}, \country{Espa\~na}}}

\abstract{We present a unified framework for the construction of localized exponential integrators that bypasses the traditional trade-off between the accuracy of global spectral methods and the efficiency of sparse finite differences. By evaluating the matrix exponential of a discrete operator strictly within a local stencil of size $n$, we "harvest" integration weights that naturally incorporate high-order temporal corrections. We prove that this Local Matrix Exponential Propagator (LMEP) is algebraically isomorphic to optimal semi-Lagrangian transport for advection and provides algebraically exact coupled evolution for mixed-physics operators, effectively eliminating the commutator errors associated with operator splitting. The framework is extended to semi-linear systems via a localized augmented matrix approach, facilitating the evaluation of Exponential Time Differencing (ETD) $\phi$-functions through sparse, banded operations. Numerical experiments on the viscous Burgers, Korteweg-de Vries, and Allen-Cahn equations demonstrate that the method preserves high-order temporal accuracy and exhibits  superior stability at high Courant numbers across both periodic and non-periodic domains. We empirically demonstrate that this localized approach yields optimal $\mathcal{O}(N)$ scaling and, for high-CFL upwind configurations, total execution times that remain strictly independent of the spatial approximation order.}

\keywords{Localized matrix exponential, Exponential time differencing, Semi-Lagrangian schemes, Stiff partial differential equations, High-CFL stability, Operator splitting}

\maketitle

\section{Introduction}
\label{sec:intro}

The numerical simulation of multi-physics phenomena, ranging from geophysical fluid dynamics to nonlinear materials science, necessitates the integration of partial differential equations (PDEs) that are often characterized by severe temporal stiffness and complex spatial coupling. The fundamental challenge in designing such integrators lies in balancing three often-conflicting requirements: the high-order spatial accuracy of global spectral methods, the strict $O(N)$ computational efficiency of local stencils, and the robust stability required to evolve stiff operators with competitive time steps.

Traditionally, practitioners have addressed these challenges through two primary paradigms. Global spectral methods provide exponential convergence for periodic and simple geometries \cite{Trefethen2000}, yet they result in dense differentiation matrices that scale poorly ($O(N^2)$ or $O(N \log N)$) and lack the geometric flexibility required for modern engineering applications. Conversely, operator-splitting techniques, such as the ubiquitous Strang splitting \cite{Strang1968}, decompose complex evolution operators into simpler components that can be solved efficiently. However, as analyzed in recent research on nonlinear evolution equations \cite{Siqi2025}, these methods introduce persistent "commutator" errors. In high-accuracy regimes, these splitting errors frequently dominate the total numerical uncertainty, particularly when coupling advection-dominated transport with stiff diffusion or dispersion \cite{Hundsdorfer2003}.

Exponential Time Differencing (ETD) methods \cite{Hochbruck2010} have emerged as a powerful third paradigm. By integrating the linear part of a PDE exactly via the matrix exponential, ETD schemes exhibit superior stability and preserve physical invariants more effectively than standard IMEX or Runge-Kutta methods \cite{Cox2002}. Despite their theoretical advantages, global ETD is hindered by the "fill-in" phenomenon: the exponential of a sparse differentiation matrix is typically dense. While recent advances have improved the parallel efficiency of matrix-vector actions for large-scale systems \cite{Wu2021,Li2022_Calcolo,Blanes2026}, the computational cost remains a significant barrier for non-periodic problems and unstructured grids.

To mitigate these constraints, significant progress has been made in developing localized ETD methods, particularly those leveraging domain decomposition (DD) frameworks. Recent works have introduced both overlapping \cite{Hoang2018,Li2021} and nonoverlapping \cite{Hoang2020} DD strategies, which facilitate parallel integration for diffusion and semi-linear parabolic equations. These approaches effectively distribute the computational burden but often rely on iterative interface solvers or Schwarz waveform relaxation to maintain accuracy at subdomain boundaries. Parallel efforts in radial basis functions (RBF-FD) have also sought to "harvest" integration weights to maintain mass conservation in localized settings \cite{Fornberg2015, Qiao2025}. However, many localized methods treat the discretization as a numerical approximation of a global operator, rather than a unified algebraic framework derived directly from the underlying physics of the continuous propagator.

In this paper, we propose the Local Matrix Exponential Propagator (LMEP), a unified framework that reconstructs global time-steppers from locally exact evolution operators. Our approach is based on a profound yet computationally simple observation: the matrix exponential of a discrete operator, when restricted to a local polynomial subspace, automatically generates the high-order Taylor corrections required for both spatial and temporal accuracy. This framework effectively unifies the disparate perspectives of finite differences and semi-Lagrangian transport. We establish two fundamental theoretical pillars for this framework:
\begin{enumerate}
\item \textbf{Theorem \ref{th:isomorphism}:} For pure advection, we prove that the LMEP is algebraically isomorphic to optimal Lagrange-based semi-Lagrangian transport \cite{Staniforth1991, Steinstraesser2025}. This allows for high-CFL integration using purely sparse, banded algebraic operations.
\item \textbf{Theorem \ref{th:commutator}:} For mixed linear operators (e.g., advection-diffusion), we demonstrate that the LMEP provides the exact coupled evolution for the local interpolant. This bypasses the theoretical accuracy limits of traditional operator-splitting by resolving the "coupling" within the local exponential evaluation \cite{Siqi2025}.
\end{enumerate}

Finally, we extend the LMEP to semi-linear systems through an Augmented Local Matrix approach. This technique allows for the simultaneous "harvesting" of the entire hierarchy of ETD $\phi$-functions, ensuring that nonlinear forcing terms are integrated with the same stability characteristics as the linear propagator \cite{Kassam2005}. By restricting all spectral evaluations to a stencil of size $n \ll N$, we maintain a strictly banded structure and $O(N)$ efficiency. We validate the framework across a series of demanding benchmarks, including the viscous Burgers', Korteweg-de Vries (KdV), and Allen-Cahn equations, demonstrating preserved high-order accuracy and optimal scaling on both periodic and non-uniform Chebyshev grids.

\section{The Unified Generator Framework}
\label{sec:framework}

Consider the PDE problem
\begin{equation}\label{eq:pde}
    \frac{\partial u}{\partial t} = \mathcal{L} u, \quad x \in \Omega, \; t > 0,
\end{equation} 
with compatible boundary conditions, where $\mathcal{L}$ denotes the spatial differential operator. This operator may represent linear advection, diffusion, dispersion, or any linear combination thereof.

To construct a numerical approximation for equation \eqref{eq:pde}, we first discretize the spatial domain $\Omega$ into a grid $I$ consisting of $N$ points, replacing the continuous operator $\mathcal{L}$ with an $N\times N$ differentiation matrix $\mathbf{D}$. This reduces the PDE to a \textit{semi-discrete system} of ordinary differential equations:
\begin{equation}
    \frac{d\mathbf{u}}{dt} = \mathbf{D} \mathbf{u},
\end{equation}
where $u_j(t) \approx u(x_j, t)$ for grid points $x_j \in I$. The exact analytical solution of this semi-discrete system is formally given by the global linear operator 
\begin{equation}
\mathbf{u}(t+\Delta t) = \exp(\Delta t \mathbf{D})\mathbf{u}(t).
\end{equation} 
While mathematically exact, this global matrix exponential is computationally intractable for large-scale systems due to the ``fill-in'' phenomenon \cite{Moler2003}. The exponential of a sparse differentiation matrix $\mathbf{D}$ becomes a generally dense $N \times N$ matrix, requiring $\mathcal{O}(N^3)$ operations to compute and $\mathcal{O}(N^2)$ memory to store.

\subsection{Local Matrix Exponential Propagator}
To overcome this bottleneck while retaining exponential accuracy, we propose the \textit{Local Matrix Exponential Propagator (LMEP)}. Our methodology reconstructs the global time-stepper by assembling locally exact evolution operators. The construction proceeds as follows:

\begin{enumerate}
    \item \textbf{Localization:} We restrict the evolution to a local stencil $S_n \subset I$ of $n$ nodes. Let $\mathbf{D}_n \in \mathbb{R}^{n \times n}$ be the standard finite difference matrix approximating the spatial operator, with weights computed via Fornberg's recursive algorithms to ensure high-order accuracy on arbitrary spaced grids \cite{Fornberg1988}.
    
    \item \textbf{Local Evolution:} We compute the local propagator $\mathbf{W}_n(\Delta t) = \exp(\Delta t \mathbf{D}_n)$. This $n \times n$ operator captures the exact analytical evolution of the stencil data projected onto the subspace of polynomials $\mathcal{P}_{n-1}$.
    
    \item \textbf{Harvesting and Assembly:} To update a specific grid point $x_i$, we extract the corresponding row vector from $\mathbf{W}_n$ (typically the center row for interior points or edge rows for boundaries). These weights are then assembled into a sparse global integrator matrix with bandwidth $n$.
\end{enumerate}

This approach effectively allows us to ``harvest'' the spectral accuracy of the matrix exponential while enforcing a sparse sparsity pattern. We now demonstrate that this algebraic exponentiation naturally recovers and unifies optimal geometric schemes.

\subsection{Advection and Algebraic Isomorphism}
Consider the scalar linear advection equation $u_t + a u_x = 0$. The corresponding generator is $\mathbf{W}_n = \exp(-a \Delta t \mathbf{D}_n)$.


\begin{theorem}[Isomorphism to Lagrange Interpolation]\label{th:isomorphism}
Let $\mathbf{u} \in \mathbb{R}^n$ represent the values of a polynomial $p(x) \in \mathcal{P}_{n-1}$ on the stencil. The vector $\mathbf{v} = \exp(-a \Delta t \mathbf{D}_n)\mathbf{u}$ contains the exact values of the shifted polynomial $p(x - a\Delta t)$. Consequently, the weights of the LMEP are identical to Lagrange interpolation weights.
\end{theorem}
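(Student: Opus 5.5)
The plan is to identify $\mathbf{D}_n$ with an exact linear operator on the finite-dimensional space $\mathcal{P}_{n-1}$ and then exponentiate that operator. Let $x_1,\dots,x_n$ be the (distinct) stencil nodes. Since the Fornberg weights are exact for polynomials of degree $\le n-1$, the first-derivative matrix satisfies $\mathbf{D}_n \mathbf{u} = (p'(x_1),\dots,p'(x_n))^T$ whenever $\mathbf{u}=(p(x_1),\dots,p(x_n))^T$ with $p\in\mathcal{P}_{n-1}$. Denote by $E:\mathcal{P}_{n-1}\to\mathbb{R}^n$ the evaluation map; unisolvence of distinct nodes makes $E$ an isomorphism. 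The exactness property then says $\mathbf{D}_n = E\,\partial_x\,E^{-1}$, i.e. $\mathbf{D}_n$ is the matrix of differentiation on $\mathcal{P}_{n-1}$ in the nodal (Lagrange) basis.

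From this similarity we get $\mathbf{D}_n^k = E\,\partial_x^k E^{-1}$ for all $k$, and hence
\[
\exp(-a\Delta t\,\mathbf{D}_n) = E\,\exp(-a\Delta t\,\partial_x)\,E^{-1},
\]
where the exponential series on the right is a finite sum because $\partial_x^n=0$ on $\mathcal{P}_{n-1}$. In particular $\mathbf{D}_n$ is nilpotent and no convergence questions arise. Taylor's theorem, which is exact for polynomials, gives $\sum_{k=0}^{n-1}\frac{(-a\Delta t)^k}{k!}p^{(k)}(x)=p(x-a\Delta t)$. Therefore $\mathbf{v}=E\,p(\cdot-a\Delta t)$, i.e. $v_j=p(x_j-a\Delta t)$, which is the first claim.

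For the second claim, write $p=\sum_m u_m \ell_m$ with the Lagrange cardinal polynomials $\ell_m$. Linearity then gives $v_j=\sum_m \ell_m(x_j-a\Delta t)\,u_m$. So row $j$ of $\mathbf{W}_n$ has entries $(\mathbf{W}_n)_{jm}=\ell_m(x_j-a\Delta t)$. These are precisely the Lagrange interpolation weights for evaluating the stencil interpolant at the departure point $x_j-a\Delta t$, which is the semi-Lagrangian update. Equivalently, one can apply $\mathbf{W}_n$ to the unit vectors $e_m=E\ell_m$.

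The main obstacle is the first step: making precise that $\mathbf{D}_n$ is exact on all of $\mathcal{P}_{n-1}$ at every node, including the off-center rows. This holds because each row of the Fornberg matrix uses all $n$ nodes, so each row is exact up to degree $n-1$. I would state this explicitly and note that it fails if lower-order or truncated rows are used. The remaining steps are routine linear algebra.
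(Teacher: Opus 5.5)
Your proof is correct and follows the paper's argument. Like the paper, you use the evaluation isomorphism, the exactness of $\mathbf{D}_n$ on $\mathcal{P}_{n-1}$, termination of the exponential series by nilpotency, and the finite Taylor expansion to obtain $p(x-a\Delta t)$. Your conjugation $\mathbf{D}_n = E\,\partial_x E^{-1}$, which justifies the powers $\mathbf{D}_n^k$ directly, and your explicit formula $(\mathbf{W}_n)_{jm} = \ell_m(x_j - a\Delta t)$ make rigorous what the paper states tersely, including its unargued ``Consequently'' about Lagrange weights.
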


\begin{proof}
Let $\Phi: \mathcal{P}_{n-1} \to \mathbb{R}^n$ be the evaluation isomorphism $\Phi(p) = [p(x_1), \dots, p(x_n)]^T$.
Since $\mathbf{D}_n$ is exact on $\mathcal{P}_{n-1}$ \cite{Fornberg1998}, it represents the derivative operator $\partial_x$ in this basis: $\mathbf{D}_n^{(k)} \mathbf{u} = \Phi(p^{(k)}(x))$.
Substituting this into the series expansion of the matrix exponential:
\begin{equation}
    \exp(-a \Delta t \mathbf{D}_n)\mathbf{u} = \sum_{k=0}^{\infty} \frac{(-a \Delta t)^k}{k!} \Phi\left( p^{(k)}(x) \right) = \Phi\left( \sum_{k=0}^{n-1} \frac{(-a \Delta t)^k}{k!} p^{(k)}(x) \right).
\end{equation}
The infinite sum terminates at $k=n-1$ due to the nilpotency of differentiation on polynomials of degree $n-1$. The remaining sum is exactly the finite Taylor expansion $p(x - a\Delta t)$. Thus, the LMEP operator performs exact semi-Lagrangian transport on the polynomial basis.
\end{proof}


\begin{example}[Automatic Generation of Advective Schemes]\label{ex:advection}
Consider a standard 3-node stencil $x \in \{-h, 0, h\}$ and the centered finite difference matrix approximating the first derivative:
$$\mathbf{D}_3^{(1)} = \frac{1}{h}
\begin{pmatrix} 
    -1.5 & 2 & -0.5 \\
     -0.5 & 0 & 0.5 \\
      0.5 & -2 & 1.5 
\end{pmatrix}. $$
Traditionally, deriving time-accurate schemes involves a tedious process: expanding $u(t+\Delta t)$ in a Taylor series, substituting spatial derivatives ($u_{tt} = a^2 u_{xx}$), and manually discretizing to cancel error terms.

In the LMEP framework, we simply evaluate $\mathbf{W}_3 = \exp(\nu h \mathbf{D}_3^{(1)})$ where $\nu = -a\Delta t / h$. This single algebraic operation generates valid schemes for every node:

\begin{itemize}
\item\textbf{Interior Scheme (Lax-Wendroff):} 
Extracting the center row of $\mathbf{W}_3$ automatically yields:
$$ \mathbf{w}_{\text{center}} = \left[ \frac{\nu(\nu-1)}{2}, \quad 1 - \nu^2, \quad \frac{\nu(\nu+1)}{2} \right]. $$
This matches the classic Lax-Wendroff coefficients exactly \cite{LeVeque2007}.

\item \textbf{Boundary Scheme (Beam-Warming):} 
Extracting the last row (corresponding to the downwind boundary) automatically yields:
$$ \mathbf{w}_{\text{right}} = \left[ \frac{\nu(\nu+1)}{2}, \quad -\nu(\nu+2), \quad \frac{(\nu+1)(\nu+2)}{2} \right]. $$
This recovers the second-order upwind (Beam-Warming) scheme \cite{LeVeque2007}, which is crucial for stable boundary updates. By symmetry, extracting the first row yields the corresponding one-sided scheme for the left boundary.
\end{itemize} 
\end{example}


\begin{remark}[Exactness via Nilpotency] In Example \ref{ex:advection} the matrix $\mathbf{W}_3$ is exact because the series $\sum (\nu \mathbf{D}_3^{(1)})^k/k!$ terminates naturally: the nilpotency of $\mathbf{D}_3^{(1)}$ on polynomials of degree $2$ ensures the LMEP automatically generates the correct high-order stabilizing terms.
\end{remark}

\subsection{Generalization to Diffusive and Mixed Operators}

We now generalize this result, establishing that the matrix exponential of the discrete operator yields the algebraically exact evolution for the local polynomial interpolant of any linear differential equation with constant coefficients.


\begin{theorem}[General Exactness]\label{th:commutator}
Let $\mathcal{L}$ be any linear differential operator with constant coefficients. Let $\mathbf{u} \in \mathbb{R}^n$ be the nodal values at time $t$, and let $p(x) \in \mathcal{P}_{n-1}$ be the unique polynomial interpolant of this data. The discrete evolution
\begin{equation}
\mathbf{u}(t + \Delta t) = \exp(\Delta t \mathbf{L}_n) \mathbf{u}
\end{equation}
yields the exact nodal values of the analytical solution to the Cauchy problem $\partial_t u = \mathcal{L} u$ with initial condition $u(x,t) = p(x)$.
\end{theorem}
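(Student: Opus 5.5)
The plan is to extend the argument of Theorem \ref{th:isomorphism} from the single operator $\partial_x$ to a general constant-coefficient operator $\mathcal{L} = \sum_{m=0}^{M} c_m \partial_x^m$. The local matrix is
\[
\mathbf{L}_n = \sum_{m} c_m \mathbf{D}_n^{(m)},
\]
where each $\mathbf{D}_n^{(m)}$ is the Fornberg differentiation matrix on the $n$-point stencil. I will use the evaluation isomorphism $\Phi:\mathcal{P}_{n-1}\to\mathbb{R}^n$ from the earlier proof. The first step is to record that each $\mathbf{D}_n^{(m)}$ is exact on $\mathcal{P}_{n-1}$, that is, $\mathbf{D}_n^{(m)}\Phi(q)=\Phi(q^{(m)})$ for every $q\in\mathcal{P}_{n-1}$. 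This holds because the weights for $\mathbf{D}_n^{(m)}$ are obtained by differentiating the Lagrange interpolant, and the interpolant of $\Phi(q)$ is $q$ itself. For $m\ge n$ the matrix is simply zero, which is consistent with $q^{(m)}=0$.

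By linearity, this exactness gives the intertwining relation $\mathbf{L}_n\Phi = \Phi\,\mathcal{L}|_{\mathcal{P}_{n-1}}$. Two facts are used here: $\mathcal{L}$ maps $\mathcal{P}_{n-1}$ into itself because the coefficients are constant, and the image $\Phi(q^{(m)})$ is again the evaluation of a polynomial of degree at most $n-1$. In other words, $\mathbf{L}_n = \Phi\,\mathcal{L}|_{\mathcal{P}_{n-1}}\,\Phi^{-1}$ is a similarity transform of the finite-dimensional operator $\mathcal{L}|_{\mathcal{P}_{n-1}}$. Induction then gives $\mathbf{L}_n^k\mathbf{u}=\Phi(\mathcal{L}^k p)$ for all $k$. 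Summing the exponential series, which converges absolutely in finite dimensions so the linear map $\Phi$ can be pulled outside, yields
\[
\exp(\Delta t\mathbf{L}_n)\mathbf{u}=\Phi\big(e^{\Delta t\mathcal{L}|_{\mathcal{P}_{n-1}}}p\big).
\]
Unlike in the advection case, the series does not terminate when $c_0\neq 0$, so I rely on convergence rather than on nilpotency.

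The remaining step is to identify $q(\cdot,t)=e^{t\mathcal{L}|_{\mathcal{P}_{n-1}}}p$ with the analytical solution of the Cauchy problem. By construction $q$ is a polynomial in $x$ of degree at most $n-1$, smooth in $t$, with $q(\cdot,0)=p$ and $\partial_t q=\mathcal{L}q$. So $q$ is a classical solution. I expect this identification to be the main obstacle, because it requires uniqueness of solutions for polynomial data, and the Cauchy problem on $\mathbb{R}$ with unbounded data is not unique in general (Tychonoff-type examples exist for the heat equation). I would handle it in one of two ways. The first option is to state uniqueness within the class of solutions that are polynomial in $x$ of degree at most $n-1$. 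In that class the PDE reduces to the linear ODE system $\dot{\mathbf{a}}=A\mathbf{a}$ for the coefficient vector $\mathbf{a}$, where $A$ is the upper-triangular matrix of $\mathcal{L}$ in the monomial basis, and uniqueness follows from the ODE theory. The second option, if a stronger claim is wanted, is to appeal to uniqueness in the class of solutions of polynomial growth, for example via Fourier/distributional arguments for well-posed operators. I would present the first option as the rigorous content and the second as a remark.

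Evaluating $q(\cdot,\Delta t)$ at the nodes then gives $\exp(\Delta t\mathbf{L}_n)\mathbf{u}=\Phi(q(\cdot,\Delta t))$, which is the claim. Because the exponential of the full $\mathbf{L}_n$ is taken, no splitting of $\mathcal{L}$ into parts appears, and hence no commutator error arises.
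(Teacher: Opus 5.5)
Your proof is correct. It rests on the same key fact as the paper's: by Fornberg exactness, $\mathbf{L}_n$ is the matrix of $\mathcal{L}|_{\mathcal{P}_{n-1}}$ in the nodal basis, so its exponential is the matrix of $e^{\Delta t\mathcal{L}}|_{\mathcal{P}_{n-1}}$.

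The difference is in how the exponential series is handled. The paper splits $\mathcal{L}=c_0+\mathcal{L}_{\text{diff}}$ and factors $\exp(\Delta t\mathbf{L}_n)=e^{c_0\Delta t}\exp(\Delta t\mathbf{K}_n)$, which is valid because $c_0\mathbf{I}$ commutes with $\mathbf{K}_n$. It then uses nilpotency of $\mathbf{K}_n$ (each application lowers the degree) to truncate at $k=n-1$, and pushes the finite sum through $\Phi$. You skip the split entirely: the similarity $\mathbf{L}_n=\Phi\,\mathcal{L}|_{\mathcal{P}_{n-1}}\Phi^{-1}$ plus absolute convergence in finite dimensions gives the identity directly, even though the series for $\mathbf{L}_n$ does not terminate.

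The paper's route buys an explicit structural statement: the harvested weights are $e^{c_0\Delta t}$ times polynomials of degree at most $n-1$ in $\Delta t$, which is what Examples~\ref{ex:advection} and~\ref{ex:diffusion} display. Your route is shorter and never uses degree-lowering. It therefore applies unchanged to any operator that leaves $\mathcal{P}_{n-1}$ invariant and is discretized exactly on it, such as $x\partial_x$ with $\mathrm{diag}(x_j)\mathbf{D}_n^{(1)}$, where nilpotency fails.

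Your identification step is also more careful than the paper's. The paper simply calls $\Phi(e^{\Delta t\mathcal{L}}p)$ ``the'' analytical solution. Uniqueness for unbounded polynomial data is not automatic, and ``any'' constant-coefficient $\mathcal{L}$ includes ill-posed choices. Restricting to solutions that are polynomial in $x$, and reducing to the ODE $\dot{\mathbf{a}}=A\mathbf{a}$, is the right way to make the claim precise.
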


\begin{proof}
Let $\Phi: \mathcal{P}_{n-1} \to \mathbb{R}^n$ be the evaluation isomorphism. We decompose the operator into a scalar reaction term and a purely differential part: $\mathcal{L} = c_0 + \mathcal{L}_{\text{diff}}$. Correspondingly, the discrete matrix is $\mathbf{L}_n = c_0 \mathbf{I} + \mathbf{K}_n$. 

Since the identity matrix commutes with $\mathbf{K}_n$, the exponential factorizes as $\exp(\Delta t \mathbf{L}_n) = e^{c_0 \Delta t} \exp(\Delta t \mathbf{K}_n)$. The matrix $\mathbf{K}_n$ is nilpotent on the subspace $\Phi(\mathcal{P}_{n-1})$ because it represents strictly differential operators that eventually annihilate polynomials of degree $n-1$ \cite{Fornberg1988}. Consequently, the series expansion terminates naturally:
\begin{equation}
\exp(\Delta t \mathbf{K}_n)\mathbf{u} = \sum_{k=0}^{n-1} \frac{\Delta t^k}{k!} \mathbf{K}_n^k \Phi(p) = \Phi\left( \sum_{k=0}^{n-1} \frac{\Delta t^k}{k!} \mathcal{L}_{\text{diff}}^k p(x) \right) = \Phi\left( e^{\Delta t \mathcal{L}_{\text{diff}}} p(x) \right).
\end{equation}
Multiplying by the scalar factor $e^{c_0 \Delta t}$ recovers the full analytical solution $\Phi(e^{\Delta t \mathcal{L}} p(x))$, completing the proof.
\end{proof}



\begin{remark}[No Operator Splitting]
A crucial advantage of the LMEP is the handling of mixed operators, such as $u_t + a u_x = \nu u_{xx}$. Standard methods often split this into separate advection and diffusion steps \cite{Strang1968}, introducing significant numerical artifacts. As noted by Hundsdorfer and Verwer \cite{Hundsdorfer2003}, these splitting errors often dominate the global error in multi-physics simulations. The unified generator $\mathbf{W}_n = \exp(\Delta t (-a\mathbf{D}_n^{(1)} + \nu\mathbf{D}_n^{(2)}))$ captures the full algebraic coupling of transport and spreading within the local stencil, bypassing the $\mathcal{O}(\Delta t^p)$ commutator errors entirely.
\end{remark}


\begin{example}[Automatic Generation of Diffusive Schemes]\label{ex:diffusion}
Consider the diffusion equation $u_t = \nu u_{xx}$. Traditionally, constructing high-order schemes requires a tedious derivation: observing that $u_t = \nu u_{xx}$ implies $u_{tt} = \nu^2 u_{xxxx}$, and manually discretizing the higher-order term to cancel temporal truncation errors \cite{LeVeque2007}.

In the LMEP framework, this complexity is replaced by evaluating $\mathbf{W}_n = \exp(\mu h^2 \mathbf{D}_n^{(2)})$ where $\mu = \nu \Delta t / h^2$. This single operation automatically generates the correct weights:

\begin{itemize}
\item \textbf{Standard Scheme (FTCS):} On a 3-node stencil $\{-h, 0, h\}$, the exponential of the second derivative matrix $\mathbf{D}_3^{(2)}$ recovers the standard Forward Time Centered Space (FTCS) scheme \cite{LeVeque2007} exactly:
$$ \mathbf{w}_{\text{center}} = [\mu, \quad 1 - 2\mu, \quad \mu]. $$
Because the 3-node stencil only resolves polynomials up to degree 2, the discrete fourth-derivative matrix is strictly zero ($(\mathbf{D}_3^{(2)})^2=0$), causing the exponential series to truncate exactly after the linear term.

\item \textbf{High-Order Stabilization:} On a 5-node stencil $\{-2h, \dots, 2h\}$, the LMEP automatically incorporates the necessary higher-order corrections. The center row of $\mathbf{W}_5 = \exp(\mu \mathbf{D}_5^{(2)})$ yields:
$$ \mathbf{w}_{\text{center}} = \left[ -\frac{\mu}{12} + \frac{\mu^2}{2}, \quad \frac{4\mu}{3} - 2\mu^2, \quad 1 - \frac{5\mu}{2} + 3\mu^2, \quad \frac{4\mu}{3} - 2\mu^2, \quad -\frac{\mu}{12} + \frac{\mu^2}{2} \right]. $$
Notice the automatic emergence of the $\mu^2$ terms. Traditionally, high-order diffusion schemes require matching spatial terms to the temporal error ($u_{tt} = \nu^2 u_{xxxx}$) via tedious manual derivation \cite{LeVeque2007}. Here, the matrix exponential series naturally generates the term $\frac{1}{2}(\mu \mathbf{D}_5^{(2)})^2$. This algebraic square provides the exact discrete approximation of the fourth-order derivative $\partial_{xxxx}$ required for stability \cite{Fornberg1998}, ensuring the spatial discretization automatically respects the physics of the governing PDE.
\end{itemize}
\end{example}

\section{Generalization to Semi-Linear PDEs}
\label{sec:etd}

We extend the LMEP framework to semi-linear equations of the form:
\begin{equation}
\frac{\partial u}{\partial t} = \mathcal{L}u + \mathcal{N}(u, t),
\end{equation}
where $\mathcal{L}$ is a linear operator and $\mathcal{N}$ is a nonlinear forcing term.
Standard Exponential Time Differencing (ETD) methods integrate the linear part exactly using Duhamel's formula:
\begin{equation}
u(t_{k+1}) = e^{\mathcal{L}\Delta t}u(t_k) + \int_{0}^{\Delta t} e^{\mathcal{L}(\Delta t - \tau)} \mathcal{N}(u(t_k + \tau), t_k + \tau) \, d\tau.
\end{equation}
The core challenge is evaluating the integral involving the matrix exponential and the nonlinear term. 
Since the future evolution of $\mathcal{N}$ is unknown, the strategy of ETD is to approximate the nonlinearity as a polynomial in time and integrate it against the matrix exponential. This process naturally generates a hierarchy of operators known as $\phi$-functions, defined recursively to resolve the singularity at $z=0$:
\begin{equation}
\phi_0(z) = e^z, \quad \phi_{j+1}(z) = \frac{\phi_j(z) - 1/j!}{z}, \quad j \ge 0.
\end{equation}
These functions represent the exact response of the linear system to polynomial forcing. Based on how $\mathcal{N}$ is approximated, two primary classes of schemes arise:

\begin{itemize}
\item \textbf{ETD Multistep Methods (ETD-$s$):} These extend Adams-Bashforth schemes by using backward differences ($\nabla$) to interpolate the history of $\mathcal{N}$ \cite{Hundsdorfer2003}. The general order-$s$ update is:
\begin{equation}
u_{k+1} = e^{\mathcal{L}\Delta t}u_k + \Delta t \sum_{j=0}^{s-1} \phi_{j+1}(\mathcal{L}\Delta t) \nabla^j \mathcal{N}(u_k,t_k).
\end{equation}

\item
\textbf{ETD Runge-Kutta (ETDRK4):} To improve stability and avoid startup costs, Runge-Kutta variants evaluate $\mathcal{N}$ at intermediate substeps \cite{Kassam2005}. The classic fourth-order scheme computes stages $\mathbf{a}, \mathbf{b}, \mathbf{c}$:
\begin{align*}
a_k &= e^{\mathcal{L}\Delta t/2} u_k + \frac{\Delta t}{2} \phi_1(\tfrac{\mathcal{L}\Delta t}{2}) \mathcal{N}(u_k,t_k), \\
b_k &= e^{\mathcal{L}\Delta t/2} u_k + \frac{\Delta t}{2} \phi_1(\tfrac{\mathcal{L}\Delta t}{2}) \mathcal{N}(a_k, t_k + \tfrac{\Delta t}{2}), \\
c_k &= e^{\mathcal{L}\Delta t/2} a_k + \frac{\Delta t}{2} \phi_1(\tfrac{\mathcal{L}\Delta t}{2}) \left( 2\mathcal{N}(b_k, t_k + \tfrac{\Delta t}{2}) - \mathcal{N}(u_k, t_k) \right).
\end{align*}
The final update combines these using weights constructed from higher-order $\phi$-functions (where $\phi_j = \phi_j(\mathcal{L}\Delta t)$):
\begin{equation}
u_{k+1} = e^{\mathcal{L}\Delta t} u_k + \Delta t \left[ \alpha \mathcal{N}(u_k) + \beta (\mathcal{N}(a_k) + \mathcal{N}(b_k)) + \gamma \mathcal{N}(c_k) \right],
\end{equation}
with coefficients defined to cancel stiff error terms up to fourth order:
$$\alpha = \phi_1 - 3\phi_2 + 4\phi_3, \quad \beta = 2\phi_2 - 4\phi_3, \quad \gamma = 4\phi_3 - \phi_2.$$

\end{itemize}

A major challenge in ETD implementations is the numerical instability of recursive $\phi$-function formulas near $z=0$ due to catastrophic cancellation errors \cite{Kassam2005}. To circumvent this, one common approach is the \textit{Augmented Matrix} method \cite{Sidje1998, Moler2003}. This approach computes all necessary operators simultaneously by exponentiating a higher-dimensional block matrix composed of $N \times N$ identity and zero blocks \cite{AlMohy2011}. For a method requiring $\{\phi_0, \dots, \phi_3\}$, the augmented operator and its exponential are:
\begin{equation}\label{eq:augmented_matrix}
\tilde{\mathcal{A}} = 
\begin{pmatrix}
\mathcal{L} & I & 0 & 0 \\
0 & 0 & I & 0 \\
0 & 0 & 0 & I \\
0 & 0 & 0 & 0
\end{pmatrix} 
\implies
\exp( \tilde{\mathcal{A}} \Delta t ) = 
\begin{pmatrix}
e^{\mathcal{L}\Delta t} & \Delta t \phi_1 & \Delta t^2 \phi_2 & \Delta t^3 \phi_3 \\
0 & I & \Delta t I & \frac{\Delta t^2}{2} I \\
\vdots & \vdots & \vdots & \vdots 
\end{pmatrix}.
\end{equation}
The first block-row naturally contains the linear propagator and the exact $\phi$-functions scaled by powers of $\Delta t$. This allows for the "harvesting" of all integration weights in a single operation, avoiding explicit recurrence relations entirely.

While analytically elegant, applying this technique globally presents a severe computational barrier. For a spatial domain with $N$ degrees of freedom, the augmented operator $\tilde{\mathcal{A}}$ has dimensions $sN \times sN$ (where $s$ is the number of required $\phi$-functions). Computing the exponential of this massive matrix scales as $\mathcal{O}((sN)^3)$ and results in a fully dense operator \cite{Moler2003, Sidje1998}. This destroys the sparsity of the spatial discretization and makes the standard augmented matrix approach prohibitively expensive for large-scale systems.

\subsection{The Augmented LMEP Framework}
\label{sec:augmented_lmep}
To harness the stability of the augmented matrix method without the crippling global cost, we introduce the \textit{Augmented Local Matrix}. By restricting the spectral evaluation to the polynomial subspace of the stencil, we reduce the problem dimension from $sN$ to $sn$ (where $n \ll N$).

\begin{enumerate}
\item \textbf{Augmented Stencil Definition:} For a local stencil of size $n$ with discrete linear operator $\mathbf{L}_n$, we construct a block matrix $\tilde{\mathbf{A}}_n$ of dimension $sn \times sn$. For a scheme requiring up to $\phi_{s-1}$, the generator is:
\begin{equation}
\tilde{\mathbf{A}}_n = \begin{pmatrix}
\mathbf{L}_n & \mathbf{I}_n & \mathbf{0} & \dots \\
\mathbf{0} & \mathbf{0} & \mathbf{I}_n & \dots \\
\vdots & \vdots & \vdots & \ddots \\
\mathbf{0} & \mathbf{0} & \mathbf{0} & \mathbf{0}
\end{pmatrix}.
\end{equation}

\item \textbf{Simultaneous Weight Harvesting:} Exponentiating this block matrix, $\mathbf{W}_n^{\text{aug}} = \exp(\Delta t \tilde{\mathbf{A}}_n)$, automatically generates the exact linear propagator and the complete hierarchy of $\phi$-functions (where $\phi_j = \phi_j(\Delta t \mathbf{L}_n)$) in the first block-row:
\begin{equation}
\mathbf{W}_n^{\text{aug}} =
\begin{pmatrix}
e^{\Delta t \mathbf{L}_n} & \Delta t \phi_1 & \Delta t^2 \phi_2 & \dots \\ 
\mathbf{0} & \mathbf{I}_n & \Delta t \mathbf{I}_n & \dots \\ 
\vdots & \vdots & \vdots & \ddots 
\end{pmatrix}. 
\end{equation} 
By extracting the rows corresponding to the stencil center from these blocks, we obtain a set of weight vectors $\{\mathbf{w}_{\text{lin}}, \mathbf{w}_{\phi_1}, \mathbf{w}_{\phi_2}, \dots\}$. These weights allow us to construct high-order semi-linear integrators that are as sparse as the underlying stencil.

\item  \textbf{Localized Implementation:} Once the weight vectors $\{\mathbf{w}_{\text{lin}}, \mathbf{w}_{\phi_1}, \dots\}$ are harvested, the implementation of specific time-stepping schemes is immediate. We simply substitute the global operators in the standard definitions (e.g., the multistep update or the ETDRK4 stages $\mathbf{a}, \mathbf{b}, \mathbf{c}$) with their corresponding local discrete counterparts. Whether integrating the history of the nonlinearity (ETD-$s$) or computing intermediate Runge-Kutta stages, every term is evaluated via dot products on the local stencil. This yields a node-by-node update that inherits the high-order temporal stability of the underlying spectral scheme while strictly enforcing a spatial dependency of only $n$ neighbors.

\end{enumerate}

\section{Numerical Validations}
\label{sec:experiments}

We validate the theoretical foundations of the LMEP framework through a series of reproducible benchmarks, progressing from linear transport to stiff, nonlinear dynamics. These experiments are specifically structured to verify the core pillars of our approach: (i) the algebraic isomorphism to Semi-Lagrangian methods for high-CFL advection (Theorem \ref{th:isomorphism}), (ii) the elimination of operator-splitting errors in coupled linear systems (Theorem \ref{th:commutator}), and (iii) the preservation of high-order temporal accuracy within the localized ETD formulation for nonlinear PDEs.

\subsection{High-CFL Transport and Spectral Semi-Lagrangian Isomorphism}
\label{sec:advection}

To empirically validate the algebraic isomorphism established in Theorem \ref{th:isomorphism}, we simulate the 1D linear advection equation with periodic boundary conditions,
\begin{equation}
u_t + a u_x = 0,  \quad x \in [-1, 1], 
\end{equation} 
with a characteristic wave speed $a=1$. The initial condition is a sharp Gaussian profile $u(x,0) = \exp(-40x^2)$, tracked over 100 full periods to evaluate the long-term accumulation of numerical errors.

We evaluate the Local Matrix Exponential Propagator (LMEP) across two distinct regimes: a Lagrangian configuration ($\nu \in \mathbb{Z}$) and a non-Lagrangian configuration ($\nu \in \mathbb{Z} + 0.5$). For centered stencils, the Courant number is $\nu = 1 - \sigma$, while for one-sided (fully upwind) stencils, we deliberately test extreme CFL values of $\nu = \frac{n+1}{2} - \sigma$, where $\sigma$ represents the sub-grid shift.

As illustrated in Fig.~\ref{fig:combined_conv}, the Lagrangian configurations for both stencil types exhibit an immediate error floor near machine precision ($\approx 10^{-12}$), independent of the stencil size $n$. This confirms Theorem \ref{th:isomorphism}: when the physical characteristic lands exactly on a grid node, the matrix exponential recovers the discrete identity shift perfectly, effectively performing exact semi-Lagrangian transport \cite{Staniforth1991}. In the non-Lagrangian case ($\sigma=0.5$), the characteristic falls midway between nodes, requiring interpolation. Here, the LMEP demonstrates strict spectral convergence as $n$ increases, acting as an optimal localized spectral filter \cite{Fornberg1998,Trefethen1996}.

A comparison of the spectral footprints in Fig.~\ref{fig:spectral_centered} and \ref{fig:spectral_onesided} reveals a fundamental difference in stability characteristics. Centered stencils (Fig.~\ref{fig:spectral_centered}) exhibit near-neutral stability, with the amplification factor $|G|$ remaining remarkably close to unity across the majority of the resolved spectrum. This allows centered stencils to maintain high-fidelity, non-dissipative transport even at high orders.

In contrast, one-sided stencils (Fig.~\ref{fig:spectral_onesided}) naturally introduce numerical dissipation—manifesting as a characteristic "hump" in the dissipation plots—which serves to stabilize high-CFL upwind transport. Crucially, because the stable Courant number $\nu$ scales linearly with the stencil size ($n$), the permissible time step $\Delta t$ also increases proportionally. This stability is particularly impressive: a CFL of $\nu \approx 12.5$ for $n=25$ far exceeds the $\nu \le 1$ limit of standard explicit upwind schemes \cite{LeVeque2007}. However, this stability is finite; as the stencil size reaches $n=25$, the one-sided configuration begins to exhibit high-frequency oscillations. This behavior indicates the onset of the Runge phenomenon at the stencil edges, where the high-degree polynomial interpolant becomes hyper-sensitive. Based on these observations, we find that stencil sizes in the range $n \in [7, 15]$ provide the optimal balance between spectral-like accuracy and robust $\ell_\infty$ stability.

A major advantage of the LMEP framework is that this spectral-quality evolution is achieved through high-performance algebraic sparsity. While traditional global spectral methods require dense $N \times N$ matrices ($\mathcal{O}(N^2)$), the LMEP localizes the matrix exponential calculation to a small $n \times n$ stencil. When assembled, the resulting global evolution matrix is strictly banded with a bandwidth of $n$.

For stationary grids, these unique weights are harvested once during an initialization phase and stored. Consequently, while the per-step cost of the propagator scales as $\mathcal{O}(nN)$, the total execution time for one-sided stencils does not scale with $n$. Because the time step $\Delta t \propto n$, the number of steps required to reach a fixed final time scales as $\mathcal{O}(1/n)$, effectively canceling out the increased per-step cost (as demonstrated by the efficiency metrics in Fig.~\ref{fig:full_stencil_analysis}). This provides the accuracy of geometric semi-Lagrangian tracking with the efficiency of sparse finite-difference solvers, without the typical runtime penalties associated with high-order methods. This sparsity is maintained even for mixed operators, as the LMEP "harvests" the combined physics into a single set of sparse weights, bypassing the need for computationally expensive operator splitting.

\begin{figure}[htbp]
    \centering
    \begin{subfigure}[b]{1.0\textwidth}
        \centering
        \includegraphics[width=1\textwidth]{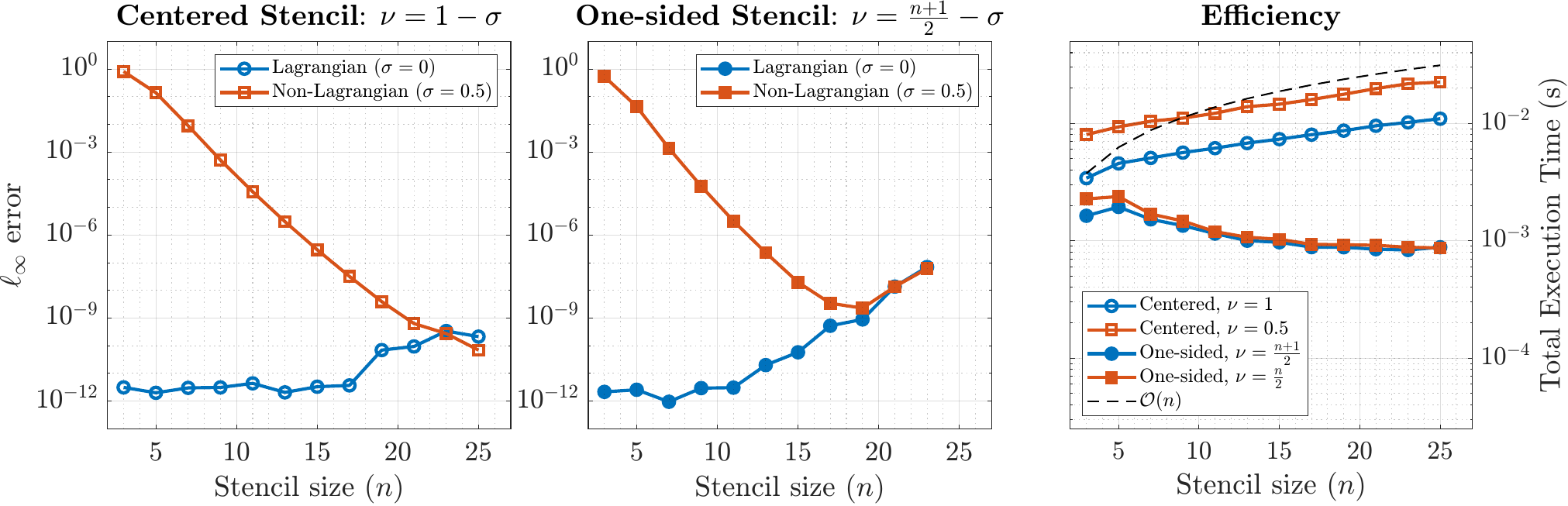}
        \caption{Global $\ell_\infty$ convergence and computational efficiency.} 
        \label{fig:combined_conv}
    \end{subfigure}
    
    \vspace{1.5em}
    
    \begin{subfigure}[b]{1.0\textwidth}
        \centering
        \includegraphics[width=0.95\textwidth]{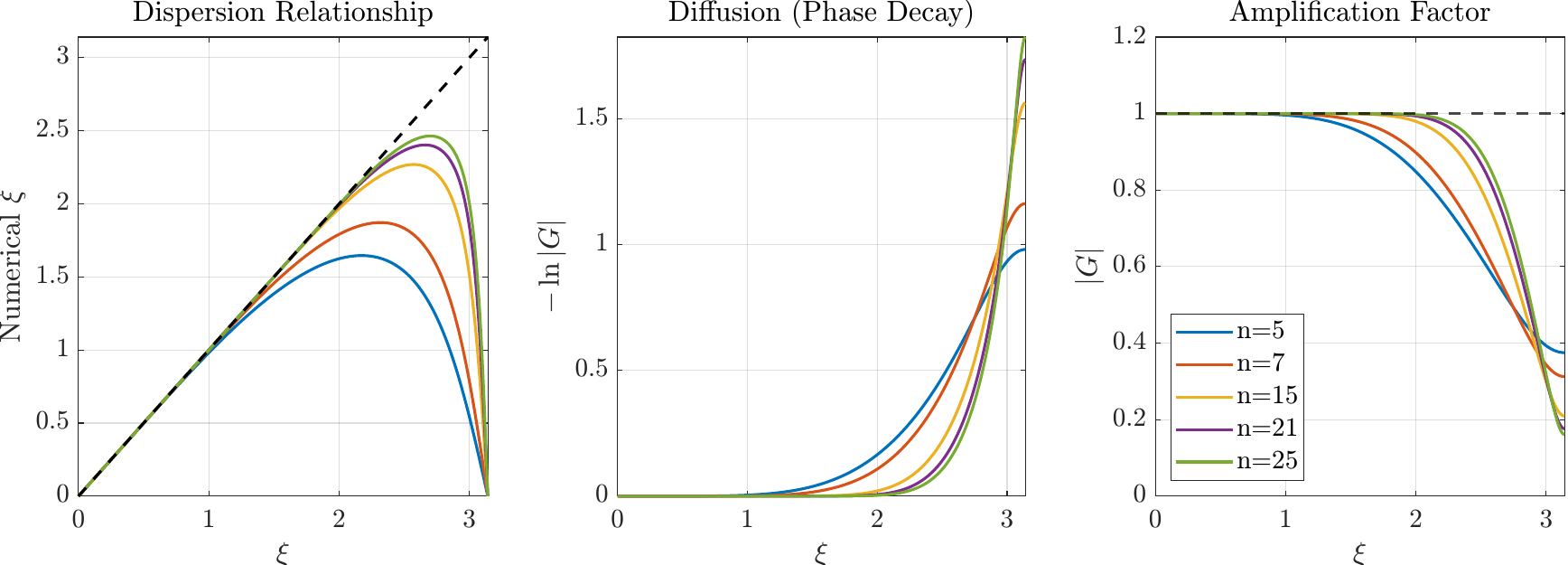}
        \caption{Spectral properties for Non-Lagrangian Centered stencils ($\sigma=0.5, \nu=0.5$): Near-neutral stability across all $n$.}
        \label{fig:spectral_centered}
    \end{subfigure}
    
    \vspace{1.5em}
    
    \begin{subfigure}[b]{1.0\textwidth}
        \centering
        \includegraphics[width=0.95\textwidth]{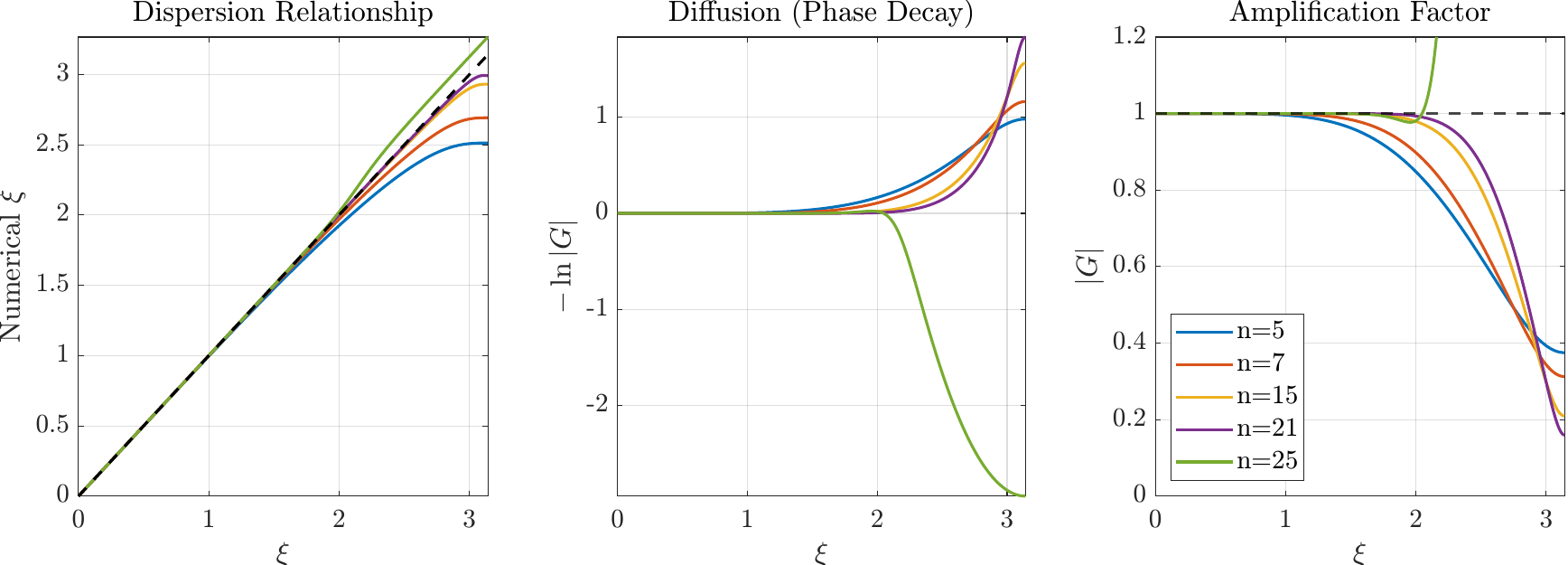}
        \caption{Spectral properties for Non-Lagrangian One-sided stencils ($\sigma=0.5, \nu = \frac{n}{2}$): Note the high-CFL stability limit and the onset of Runge-related noise at $n=25$.}
        \label{fig:spectral_onesided}
    \end{subfigure}
    
    \caption{Numerical Analysis of the Localized ETD Operator. Top: convergence reaching machine precision for Lagrangian transport. Middle/Bottom: spectral breakdown (dispersion and diffusion) for non-Lagrangian configurations. The one-sided configuration permits extremely high CFL conditions ($\nu \gg 1$) but encounters stability limits at $n=25$ due to edge-point sensitivity. The included efficiency data highlights that total execution time remains flat as $n$ increases, owing to the $\Delta t \propto n$ scaling.
}
    \label{fig:full_stencil_analysis}
\end{figure}

\subsection{Exact Coupled Evolution: Bypassing Operator Splitting}\label{sec:coupled}

Theorem \ref{th:commutator} asserts that the LMEP framework captures the exact analytical evolution for any linear differential equation with constant coefficients, naturally avoiding the pitfalls of operator splitting. To validate this, we test the coupled periodic linear advection-diffusion equation 
\begin{equation}
u_t + a u_x = \nu u_{xx}, \quad x \in [0, 2\pi],
\end{equation}
with parameters $a = 1$ and $\nu = 0.1$. The initial condition is given by the Gaussian pulse $u(x,0) = \exp(-10(x-\pi)^2)$ and integrated in time until $t = 1$.

Traditional Method of Lines (MoL) approaches frequently isolate the advective and diffusive operators into fractional steps, introducing a persistent $\mathcal{O}(\Delta t^p)$ commutator splitting error \cite{Strang1968, Hundsdorfer2003} because the operators $[L_{adv}, L_{diff}]$ do not commute. By directly exponentiating the coupled discrete operator on a local stencil, the LMEP framework "harvests" the combined physics into a single set of sparse weights, entirely bypassing this degradation. Furthermore, while computing a global matrix exponential would traditionally result in a dense $N \times N$ matrix, the LMEP approach restricts the spectral evaluation to the local polynomial subspace. This results in a global evolution matrix that is strictly banded, preserving the $\mathcal{O}(nN)$ memory footprint and computational efficiency of standard finite differences.

To rigorously quantify the LMEP's discrete spatial error without contamination from boundary artifacts, we compare the numerical results against a highly accurate, wrapped periodic analytical solution. This ground truth is obtained by summing the infinite-domain analytical solution over $K=2$ periodic shifts, ensuring accuracy down to machine precision. Fig.~\ref{fig:unified_convergence} presents the $\ell_\infty$ error convergence landscapes across three spatial grid resolutions: $N \in \{64, 128, 512\}$. The contour maps vividly demonstrate that within the stable operating envelope (denoted by the region below the black $\Delta t^*$ stability boundary), the fully coupled LMEP method achieves spectral convergence strictly as a function of the stencil size $n$ \cite{Fornberg1998}. The complete absence of an operator-splitting error plateau implies that the accuracy of the integration is limited solely by the polynomial projection of the spatial stencil. This perfectly corroborates the analytical exactness guaranteed by Theorem \ref{th:commutator}, confirming that the coupled physics are resolved seamlessly and highly efficiently.

\begin{figure}
    \centering
    \includegraphics[width=0.8\textwidth]{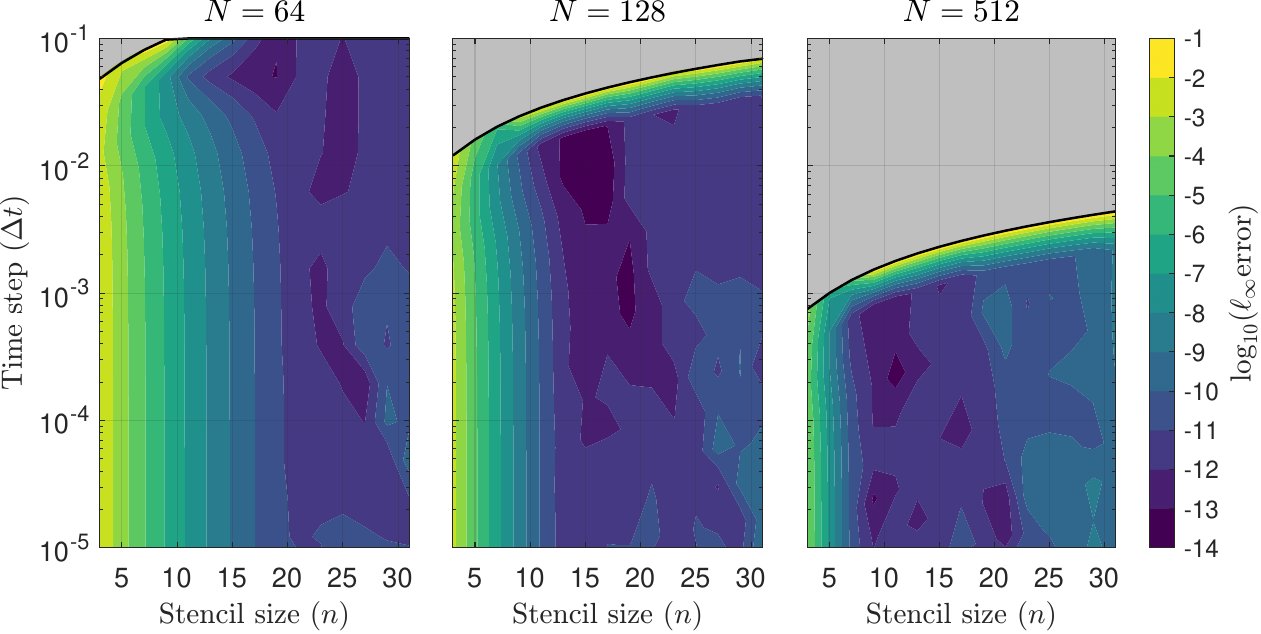}
    \caption{Error convergence landscapes for $N \in \{64, 128, 512\}$. The black line indicates the stability boundary $\Delta t^*$, showing the expansion of the stable operating envelope as a function of $n$. All plots share the same logarithmic scale to highlight the spectral convergence floor.}
    \label{fig:unified_convergence}
\end{figure}

\begin{remark}[Stability and Stencil Size]
The black boundary $\Delta t^*$ in Fig.~\ref{fig:unified_convergence} represents the empirical stability limit. In the diffusive regime, this manifests as a significant relaxation of the standard parabolic CFL condition $\Delta t \leq h^2 / (2\nu)$ \cite{LeVeque2007}. As seen in the maps, for $n=25$, the stable envelope extends nearly an order of magnitude beyond the traditional explicit limit, effectively bridging the gap toward unconditional stability without requiring implicit system solves.
\end{remark}

\subsection{Non-Linear PDEs}
The robustness of the Localized ETD formulation for stiff, non-linear systems is evaluated using three benchmarks from \cite{Kassam2005}. These tests verify that the high-order temporal accuracy of the underlying ETD scheme is preserved under the localized algebraic framework.

\subsubsection{Burgers' Equation}\label{sec:burgers}

We evaluate the robustness of the localized ETD framework using the viscous Burgers' equation on a periodic domain:
\begin{equation}
    u_t + u u_x = \nu u_{xx}, \quad x \in [-\pi, \pi),
\end{equation}
with a diffusion coefficient $\nu = 0.03$ and an initial Gaussian-like pulse $u(x,0) = \exp(-10\sin(x/2)^2)$. This equation serves as a canonical benchmark for non-linear advection-diffusion dynamics, assessing the solver's ability to resolve steep gradients without triggering the cell-Reynolds instabilities common in standard explicit schemes \cite{LeVeque2007}.

The system is integrated until $t=1$, a duration sufficient for non-linear steepening to challenge the scheme's stability. In our LMEP-ETD4 formulation, the stiff linear term $\nu u_{xx}$ is handled exactly within the local matrix exponential, while the non-linear term $N(u) = -u u_x$ is evaluated through the four stages of the ETD-RK4 scheme \cite{Kassam2005}. Crucially, the $\phi$-functions required for these stages are harvested using the localized augmented matrix approach (Section \ref{sec:augmented_lmep}), ensuring that the entire non-linear update is executed through a sequence of sparse, banded matrix-vector multiplications.

\begin{figure}
    \centering
    \includegraphics[width=0.8\textwidth]{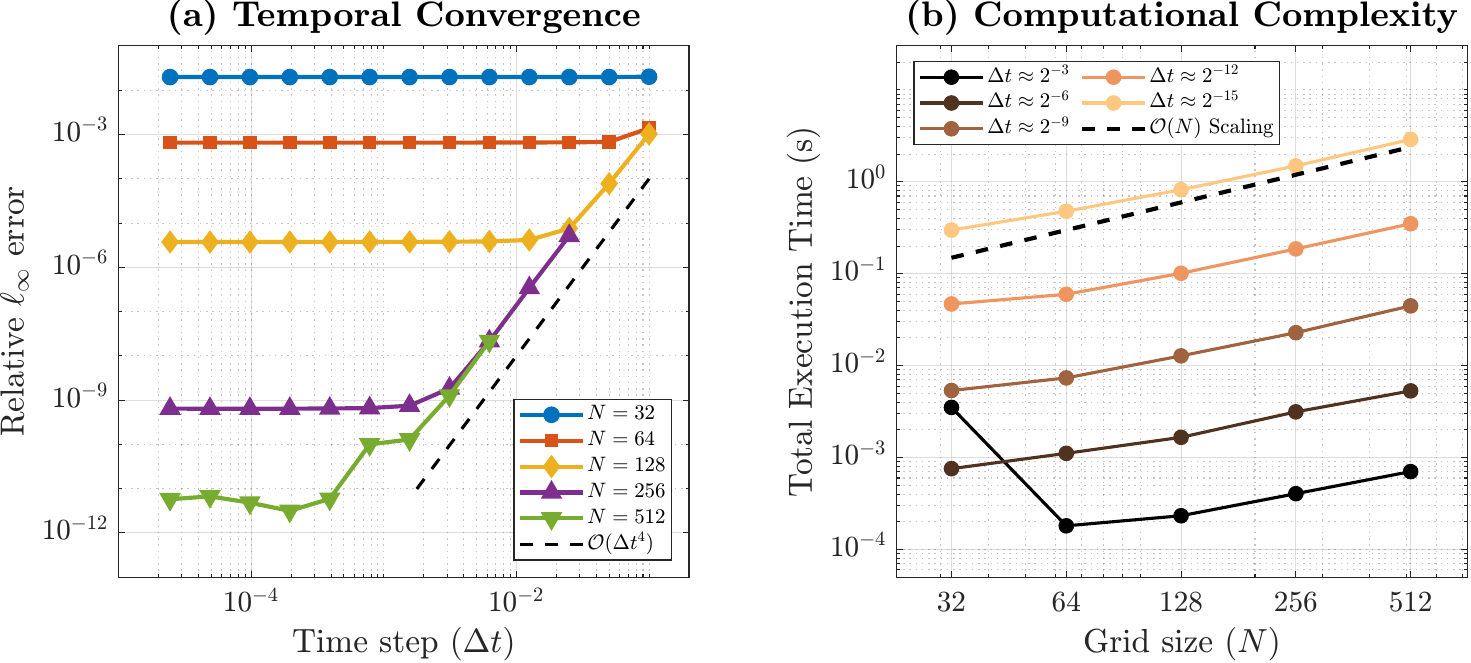}
    \caption{Numerical analysis for the viscous Burgers' equation with $\nu = 0.03$ and $n=19$. \textbf{(a)} Relative $\ell_\infty$ error convergence, demonstrating that the localized ETD-RK4 scheme preserves fourth-order accuracy in time ($\mathcal{O}(\Delta t^4)$) until the spatial error limit is reached. \textbf{(b)} Computational complexity, showing the total execution time scaling linearly ($\mathcal{O}(N)$) with grid resolution across five different time steps ($\Delta t$). This confirms the optimal efficiency of the sparse, banded operator implementation.}
    \label{fig:burgers_convergence}
\end{figure}

As shown in Fig.~\ref{fig:burgers_convergence}(a), the method exhibits the expected fourth-order temporal convergence ($\mathcal{O}(\Delta t^4)$) across a range of spatial resolutions $N$. The scheme follows the theoretical fourth-order slope until it reaches the spatial error floor determined by the resolution $N$ and stencil size $n$. This demonstrates that localizing the ETD operator does not degrade the high-order temporal features of the underlying integrator, even in the presence of non-linear steepening.

Furthermore, Fig.~\ref{fig:burgers_convergence}(b) empirically validates the computational efficiency of the localized framework. For a fixed stencil size ($n=19$), the total execution time scales linearly with the number of grid points, perfectly aligning with the theoretical $\mathcal{O}(N)$ reference line. This confirms that ``weight harvesting'' successfully translates the high-order accuracy of global ETD methods to a sparse context, where the non-linear updates are executed strictly through $\mathcal{O}(nN)$ banded matrix-vector operations. By avoiding the dense $\mathcal{O}(N^2)$ or $\mathcal{O}(N^3)$ operations typical of global matrix exponentials, the LMEP-ETD4 scheme achieves highly scalable performance without sacrificing spectral-like accuracy.

\subsubsection{Korteweg-de Vries (KdV) Equation}\label{sec:kdv}

To evaluate the performance of the localized ETD framework in the presence of higher-order dispersion, we solve the Korteweg-de Vries (KdV) equation:
\begin{equation}
u_{t} + u u_{x} + u_{xxx} = 0, \quad x \in [-\pi, \pi].
\end{equation}
Following the benchmark parameters from \cite{Kassam2005}, we initialize a two-soliton profile with $A=25$ and $B=16$. This problem serves as a rigorous test for the LMEP framework because the third-order dispersive operator creates a highly oscillatory unitary propagator, and the high-amplitude solitons challenge the spatial resolution. While global spectral methods are typically preferred here, our localized approach seeks to achieve comparable accuracy using weight harvesting from a local stencil of size $n=23$.

The simulation results ($t=0.006$) are presented in Fig.~\ref{fig:kdv_results}. The space-time evolution (top panel) illustrates the clean interaction between solitons. Crucially, the localized update captures the non-linear collision and phase shift without generating the parasitic dispersive ripples or "Gibbs-like" oscillations \cite{Trefethen2000} that typically plague standard finite difference approximations of high-order derivatives.

\begin{figure}
    \centering
    \includegraphics[width=0.95\textwidth]{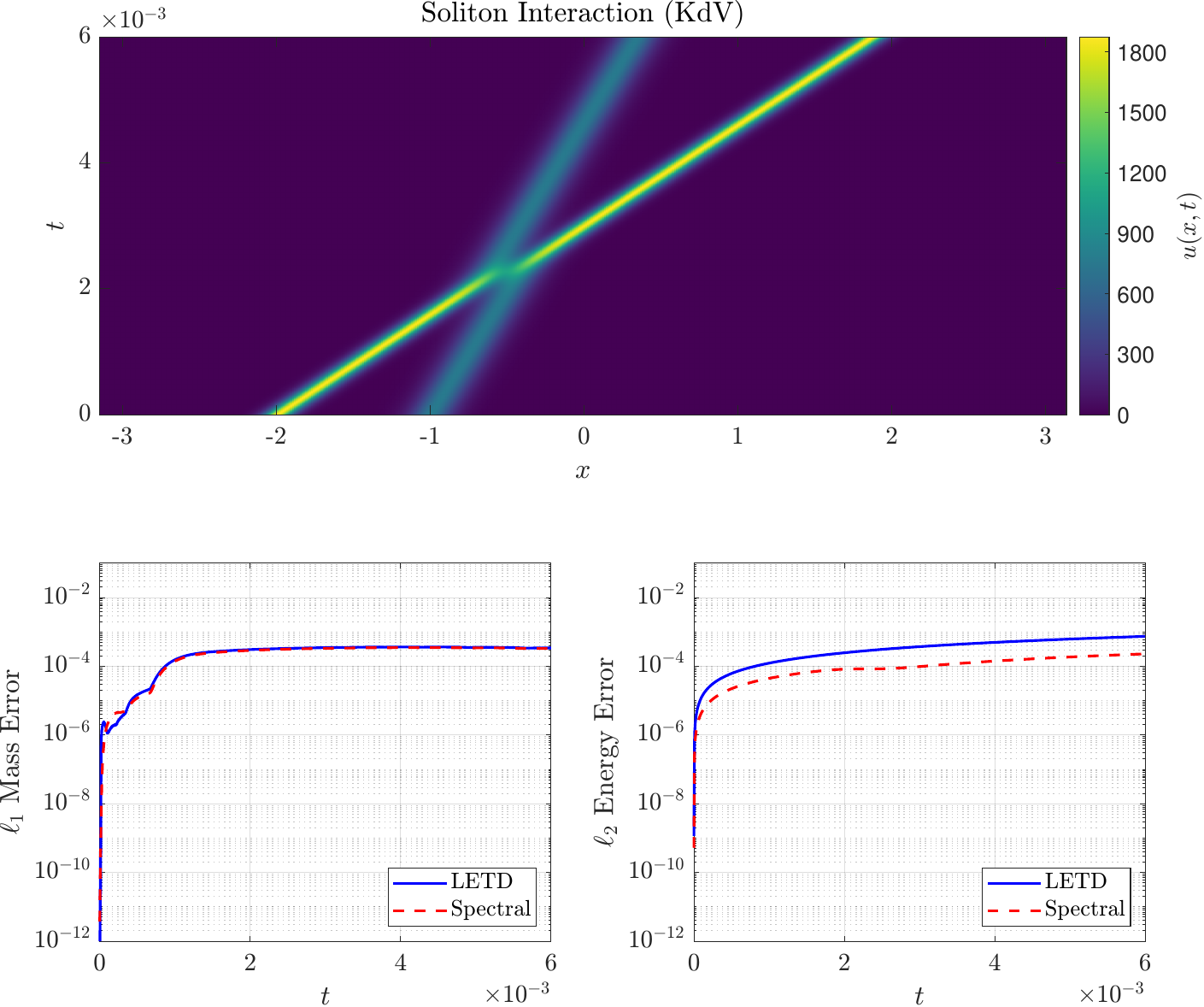}
    \caption{Numerical results for the KdV equation using parameters from \cite{Kassam2005}. Top: Space-time evolution $u(x,t)$ showing the two-soliton interaction and phase shift using LETD. Bottom-left: Comparison of mass conservation error ($\ell_1$ norm). Bottom-right: Comparison of energy conservation error ($\ell_2$ squared). The localized ETD method ($n=23$) demonstrates accuracy and conservation properties indistinguishable from the global spectral reference.}
    \label{fig:kdv_results}
\end{figure}

The conservation properties are analyzed in the bottom panels of Fig.~\ref{fig:kdv_results}, comparing the LETD method against a 512-point global Fourier spectral reference:
\begin{itemize}
\item \textbf{Mass Conservation ($L_1$ norm):} The $L_1$ error (bottom-left), defined as $I_1 = \int_{-\pi}^{\pi} |u| \, dx$, remains stable near $10^{-4}$. While standard mass conservation for the KdV equation monitors $\int_{-\pi}^{\pi} u \, dx$, we deliberately employ the absolute value to provide a more stringent test, ensuring that local dispersive errors do not artificially cancel across the domain.
\item \textbf{Energy Conservation ($L_2$ norm):} The energy error (bottom-right), defined as $I_2 = \int_{-\pi}^{\pi} u^2 \, dx$, also settles near $10^{-4}$.
\end{itemize}

The $10^{-4}$ error floor is consistent with the global spectral reference at this resolution ($N=512$), representing the spatial discretization limit of the grid rather than a temporal integration deficiency. The LETD method tracks the spectral reference exactly throughout the simulation with no evidence of secular error growth. This indicates that the localized operator effectively mimics the unitary nature of the analytical propagator. By restricting the $u_{xxx}$ evaluation to the local polynomial subspace, the LMEP framework achieves spectral-quality results while maintaining a strictly banded global matrix, bypassing the $\mathcal{O}(N^3)$ computational bottlenecks of global spectral methods.

\subsubsection{Allen-Cahn Equation}\label{sec:allen_cahn}

Finally, we examine the reaction-diffusion Allen--Cahn equation, a standard benchmark for evaluating stability in the presence of stiff nonlinearities and moving interfaces:
\begin{equation}
u_t = \epsilon u_{xx} + u - u^3, \quad x \in [-1, 1],
\end{equation}
where $\epsilon = 0.01$. We impose non-homogeneous Dirichlet boundary conditions $u(-1, t) = -1$ and $u(1, t) = 1$, initialized with the profile $u(x,0) = 0.53x + 0.47\sin(-1.5\pi x)$ \cite{Kassam2005}. The problem is solved on a Chebyshev grid ($N=64$), naturally suited for the domain's non-periodic nature and boundary-concentrated dynamics \cite{Trefethen2000}.

Because Chebyshev nodes are non-uniformly distributed, the local differentiation matrix $\mathbf{L}_n$ is unique for each grid point $x_i$. Crucially, this localized approach completely bypasses the need to compute the exponential of the global Chebyshev differentiation matrix, a notoriously ill-conditioned and dense $\mathcal{O}(N^3)$ operation. The LMEP framework adaptively harvests unique integration weights for every node, accounting for varying grid spacing without requiring coordinate transformations. Since the grid is stationary, these weights are harvested once and stored, reducing the per-step cost to sparse matrix-vector multiplication and maintaining $\mathcal{O}(N)$ efficiency.

The results ($n=21, t=70$) are shown in Fig.~\ref{fig:allen_cahn}. The evolution illustrates rapid phase separation into $u \approx \pm 1$ regions followed by a metastable period where interfaces move slowly toward annihilation (upper half of plot) \cite{Kassam2005}. Comparing against a global Chebyshev spectral reference ($N=1024, \Delta t = 10^{-4}$), the localized scheme achieves an $\ell_\infty$ error of $\approx 10^{-5}$ at $t=70$. This confirms that by treating the stiff diffusive term exactly via the local matrix exponential, the LETD framework effectively resolves the reaction-diffusion balance without the global overhead or restrictive stability limits of high-order finite differences.

\begin{figure}
\centering
\includegraphics[width=0.75\textwidth]{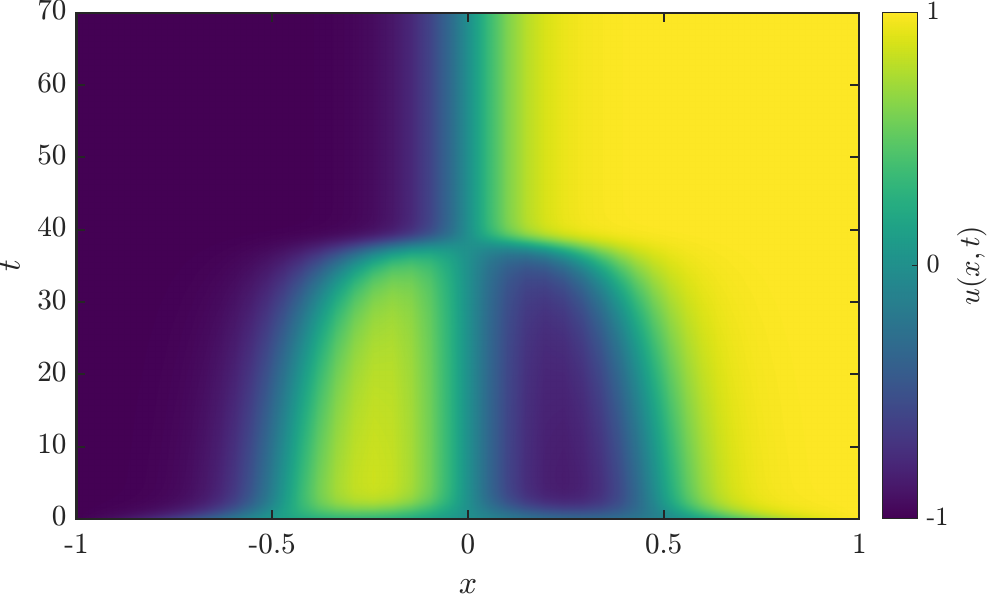}
\caption{Phase dynamics of the Allen--Cahn equation ($\epsilon = 0.01$) on a Chebyshev grid ($N=64$). The evolution captures the formation of stable equilibria and the phenomenon of metastability. Initial conditions and comparison benchmarks are adapted from \cite{Kassam2005}. The localized ETD method tracks the $N=1024$ global spectral reference with a final $\ell_\infty$ error of $10^{-5}$.}
\label{fig:allen_cahn}
\end{figure}

\section{Conclusion}
\label{sec:conclusion}
We have presented a unified algebraic framework for the construction of localized exponential integrators that effectively bridges the gap between spectral accuracy and finite-difference efficiency. By establishing the isomorphism between the discrete matrix exponential on a local polynomial subspace and optimal semi-Lagrangian transport (Theorem \ref{th:isomorphism}), we have provided a rigorous foundation for why localized weight harvesting recovers optimal geometric schemes.

We also demonstrated that this framework naturally bypasses the persistent commutator errors inherent in traditional operator-splitting methods by evaluating the coupled operator directly (Theorem \ref{th:commutator}). Our numerical validations—ranging from long-term advection over 100 periods to the stiff nonlinear dynamics of the Burgers', KdV, and Allen-Cahn equations—confirm that the Localized ETD formulation preserves high-order temporal accuracy while empirically achieving optimal $\mathcal{O}(N)$ computational complexity. Significantly, we identified a highly advantageous scaling property for upwind configurations: because the stable Courant number increases linearly with the stencil size, the total execution time remains flat even as spatial accuracy increases. 

The method exhibits robust performance on both periodic Fourier and non-periodic Chebyshev grids, confirming its versatility as a general-purpose solver for complex multi-physics systems. While this work establishes the spectral stability and high-CFL limits of the LMEP in one spatial dimension, future research will focus on the extension to higher-dimensional manifolds. In these regimes, the strict sparsity of the localized generator will offer the most profound computational advantages over traditional global spectral methods.

\section*{Software and Data Availability}
The LMEP framework, including weight harvesting routines and ETD-RK4 implementations, is open-source. The full suite of scripts required to reproduce the 1D linear advection, Burgers, KdV, and Allen--Cahn benchmarks—along with pre-computed localized weights—is available at \url{https://gitlab.com/vbrgit/LMEP-ETD.git}.

\bibliography{references}  

\end{document}